\theoremstyle{plain}
\newtheorem{thm}{Theorem}[section]
\newtheorem{lem}{Lemma}[section]
\theoremstyle{remark}
\newtheorem{rem}{Remark}[section]
\numberwithin{equation}{section}
\DeclareMathOperator{\td}{d\mspace{-2mu}}
\begin{document}

\title[The best bounds for Toader mean]
{The best bounds for Toader mean in terms of the centroidal and arithmetic means}

\author[Y.Hua]{Yun Hua}
\address[Hua]{Department of Information Engineering, Weihai Vocational College, Weihai City, Shandong Province, 264210, China}
\email{\href{mailto: Yun Hua <xxgcxhy@163.com>}{xxgcxhy@163.com}}

\author[F. Qi]{Feng Qi}
\address[Qi]{School of Mathematics and Informatics, Henan Polytechnic University, Jiaozuo City, Henan Province, 454010, China; Department of Mathematics, School of Science, Tianjin Polytechnic University, Tianjin City, 300387, China}
\email{\href{mailto: F. Qi <qifeng618@gmail.com>}{qifeng618@gmail.com}, \href{mailto: F. Qi <qifeng618@hotmail.com>}{qifeng618@hotmail.com}, \href{mailto: F. Qi <qifeng618@qq.com>}{qifeng618@qq.com}}
\urladdr{\url{http://qifeng618.wordpress.com}}

\begin{abstract}
In the paper, the authors discover the best constants $\alpha_{1}$, $\alpha_{2}$, $\beta_{1}$, and $\beta_{2}$ for the double inequalities
$$
\alpha_{1}\overline{C}(a,b)+(1-\alpha_{1}) A(a,b)< T(a,b) <\beta_{1} \overline{C}(a,b)+(1-\beta_{1})A(a,b)
$$
and
$$
\frac{\alpha_{2}}{A(a,b)}+\frac{1-\alpha_{2}}{\overline{C}(a,b)}<\frac1{T(a,b)} <\frac{\beta_{2}}{A(a,b)}+\frac{1-\beta_{2}}{\overline{C}(a,b)}
$$
to be valid for all $a,b>0$ with $a\ne b$, where
$$
\overline{C}(a,b)=\frac{2(a^{2}+ab+b^{2})}{3(a+b)},\quad A(a,b)=\frac{a+b}2,
$$
and
$$
T(a,b)=\frac{2}{\pi}\int_{0}^{{\pi}/{2}}\sqrt{a^2{\cos^2{\theta}}+b^2{\sin^2{\theta}}}\,\td\theta
$$
are respectively the centroidal, arithmetic, and Toader means of two positive numbers $a$ and $b$. As an application of the above inequalities, the authors also find some new bounds for the complete elliptic integral of the second kind.
\end{abstract}

\subjclass[2000]{Primary 26E60, 33E05; Secondary 26D15}

\keywords{Toader mean; complete elliptic integrals; arithmetic mean; centroidal mean}

\thanks{This work was supported by the Project of Shandong Province Higher Educational Science and Technology Program under grant no. J11LA57}

\thanks{This paper was typeset using \AmS-\LaTeX}

\maketitle

\section{Introduction}

In~\cite{Hua-Qi-toader31}, Toader introduced a mean
\begin{align*}
T(a,b)&=\dfrac{2}{\pi}\int_{0}^{\pi/2}\sqrt{a^2\cos^2\theta+b^2\sin^2\theta}\,\td\theta\\
&=
\begin{cases}
\dfrac{2a}\pi\mathcal{E}\Biggl(\sqrt{1-\biggl(\dfrac{b}a\biggr)^2}\,\Biggr),&a>b,\\
\dfrac{2b}\pi\mathcal{E}\Biggl(\sqrt{1-\biggl(\dfrac{b}a\biggr)^2}\,\Biggr),&a<b,\\
a,&a=b.
\end{cases}
\end{align*}
where
$$
{\mathcal{E}}={\mathcal{E}}(r)=\int_{0}^{\pi/2}\sqrt{1-r^2\sin^2\theta}\,\td\theta
$$
for $r\in[0,1]$ is the complete elliptic integral of the second kind.
The quantities
\begin{align*}
\overline{C}(a,b)&=\frac{2(a^2+ab+b^2)}{3(a+b)}, &A(a,b)&=\frac{a+b}{2}, & S(a,b)&=\sqrt{\frac{a^2+b^2}{2}}\,
\end{align*}
are called in the literature the centroidal, arithmetic, and quadratic means of two positive real numbers $a$ and $b$ with $a\ne b$. For $p\in \mathbb{R}$ and $a,b>0$ with $a\ne b$, the $p$-th power mean $M_{p}(a,b)$ is defined by
\begin{equation}\label{eq1.3}
M_{p}(a,b)=
\begin{cases}
\biggl(\dfrac{a^{p}+a^{p}}{2}\biggr)^{{1}/{p}},&p\ne 0,\\
\sqrt{ab}\,,&p=0.
\end{cases}
\end{equation}
It is well known that
\begin{equation*}
  M_{-1}(a,b)<A(a,b)=M_1(a,b)<\overline{C}(a,b)<S(a,b)=M_2(a,b)
\end{equation*}
for all $a,b>0$ with $a\ne b$.
\par
In~\cite{Hua-Qi-toader39}, Vuorinen conjectured that
\begin{equation}\label{eq1.4}
  M_{3/2}(a,b)<T(a,b)
\end{equation}
for all $a,b>0$ with $a\ne b$. This conjecture was verified by Qiu and Shen~\cite{Hua-Qi-toader31Shen} and by Barnard, Pearce, and Richards~\cite{Hua-Qi-toader311}.
In~\cite{Hua-Qi-toader312}, Alzer and Qiu presented that
\begin{equation}\label{eq1.5}
  T(a,b)<M_{(\ln2)/\ln(\pi/2)}(a,b)
\end{equation}
for all $a,b>0$ with $a\ne b$, which gives a best possible upper bound for Toader mean in terms of the power mean.
From~\eqref{eq1.4} and~\eqref{eq1.5}, one concludes that
\begin{equation}\label{eq1.6}
  A(a,b)<T(a,b)<S(a,b)
\end{equation}
for all $a,b>0$ with $a\ne b$.
In~\cite{Hua-Qi-toader34}, the authors demonstrated that the double inequality
\begin{equation}\label{eq1.7}
\alpha S(a,b)+(1-\alpha)A(a,b)<T(a,b)<\beta S(a,b)+(1-\beta)A(a,b)
\end{equation}
holds for all $a,b>0$ with $a\ne b$ if and only if $\alpha\le \frac{1}{2}$ and $\beta\ge \frac{4-\pi}{(\sqrt{2}\,-1)\pi}$.
\par
Motivated by the double inequality~\eqref{eq1.7}, we naturally ask a question: What are the best constants $\alpha_1,\alpha_2,\beta_1,\beta_2\in(0,1)$ such that the
double inequalities
\begin{equation}\label{eq3.1}
\alpha_1 \overline{C}(a,b)+(1-\alpha_1)A(a,b)<T(a,b) <\beta_1 \overline{C}(a,b)+(1-\beta_1)A(a,b)
\end{equation}
and
\begin{equation}\label{eq3.13}
\frac{\alpha_2}{A(a,b)}+\frac{1-\alpha_2}{\overline{C}(a,b)} <\frac{1}{T(a,b)}<\frac{\beta_2}{A(a,b)}+\frac{1-\beta_2}{\overline{C}(a,b)}
\end{equation}
hold for all $a,b>0$ with $a\ne b$?
\par
The main aim of this paper is to affirmatively answer the above question.

\begin{thm}\label{th3.1}
The double inequality~\eqref{eq3.1} holds for all $a,b>0$ with $a\ne b$ if and only if $\alpha_1\le \frac{3}{4}$ and $\beta_1 \ge\frac{12}{\pi}-3$.
\end{thm}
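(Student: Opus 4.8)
The plan is to turn the two-variable inequality~\eqref{eq3.1} into a one-variable monotonicity statement for the complete elliptic integrals. Because $T$, $\overline{C}$, and $A$ are symmetric in $a,b$ and homogeneous of degree one, we may assume $a>b$ and put $a=1+t$, $b=1-t$ with $t\in(0,1)$. Then $A(a,b)=1$, $\overline{C}(a,b)=1+\frac{t^{2}}{3}$, and, combining $T(a,b)=\frac{2(1+t)}{\pi}\mathcal{E}\bigl(\frac{2\sqrt{t}}{1+t}\bigr)$ with the Landen transformation $\mathcal{E}\bigl(\frac{2\sqrt{t}}{1+t}\bigr)=\frac{2\mathcal{E}(t)-(1-t^{2})\mathcal{K}(t)}{1+t}$, one obtains $T(a,b)=\frac{2}{\pi}\bigl[2\mathcal{E}(t)-(1-t^{2})\mathcal{K}(t)\bigr]$, where $\mathcal{K}(r)=\int_{0}^{\pi/2}(1-r^{2}\sin^{2}\theta)^{-1/2}\,\td\theta$ is the complete elliptic integral of the first kind. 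Since $\overline{C}(a,b)>A(a,b)$ for $a\ne b$ (part of the chain of inequalities recalled in the introduction), the double inequality~\eqref{eq3.1} is equivalent to $\alpha_{1}<F(t)<\beta_{1}$ for all $t\in(0,1)$, where
\[
F(t)=\frac{T(a,b)-A(a,b)}{\overline{C}(a,b)-A(a,b)}=\frac{3}{t^{2}}\left(\frac{2}{\pi}\bigl[2\mathcal{E}(t)-(1-t^{2})\mathcal{K}(t)\bigr]-1\right);
\]
consequently the best constants are $\alpha_{1}=\inf_{t\in(0,1)}F(t)$ and $\beta_{1}=\sup_{t\in(0,1)}F(t)$.

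Next I would pin down the endpoint values of $F$. From $\mathcal{E}(t)=\frac{\pi}{2}\bigl(1-\frac{t^{2}}{4}+O(t^{4})\bigr)$ and $\mathcal{K}(t)=\frac{\pi}{2}\bigl(1+\frac{t^{2}}{4}+O(t^{4})\bigr)$ one gets $2\mathcal{E}(t)-(1-t^{2})\mathcal{K}(t)=\frac{\pi}{2}+\frac{\pi}{8}t^{2}+O(t^{4})$, whence $F(0^{+})=\frac{3}{4}$; and since $\mathcal{E}(1)=1$ while the factor $1-t^{2}$ annihilates the divergent $\mathcal{K}$-term, $F(1^{-})=\frac{12}{\pi}-3$. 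Note $\frac{3}{4}<\frac{12}{\pi}-3$, which is consistent with the asserted pair of constants.

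The crux is to prove that $F$ is strictly increasing on $(0,1)$; granting this, $F\bigl((0,1)\bigr)=\bigl(\frac{3}{4},\,\frac{12}{\pi}-3\bigr)$ with neither endpoint attained, and the biconditional follows at once: $\alpha_{1}\le\frac{3}{4}$ forces $\alpha_{1}<F(t)$ for every $t$, whereas $\alpha_{1}>\frac{3}{4}$ fails for $t$ near $0$; symmetrically for $\beta_{1}$ near $t=1$. Differentiating $F$ and using the classical formulas $\mathcal{E}'(t)=\frac{\mathcal{E}(t)-\mathcal{K}(t)}{t}$ and $\mathcal{K}'(t)=\frac{\mathcal{E}(t)-(1-t^{2})\mathcal{K}(t)}{t(1-t^{2})}$, one first checks $\frac{\td}{\td t}\bigl[2\mathcal{E}(t)-(1-t^{2})\mathcal{K}(t)\bigr]=\frac{\mathcal{E}(t)-(1-t^{2})\mathcal{K}(t)}{t}$, and then a brief manipulation shows that $F'(t)$ has the same sign as
\[
h(t):=\pi-3\mathcal{E}(t)+(1-t^{2})\mathcal{K}(t).
\]
Since $h(0)=\pi-\frac{3\pi}{2}+\frac{\pi}{2}=0$, it is enough to show $h'(t)>0$ on $(0,1)$; one more differentiation yields $h'(t)=\frac{(2-t^{2})\mathcal{K}(t)-2\mathcal{E}(t)}{t}$, so the whole problem reduces to the single inequality $(2-t^{2})\mathcal{K}(t)>2\mathcal{E}(t)$ for $t\in(0,1)$.

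Finally I would dispatch this inequality by the integral representations $\mathcal{K}(t)-\mathcal{E}(t)=\int_{0}^{\pi/2}\frac{t^{2}\sin^{2}\theta}{\sqrt{1-t^{2}\sin^{2}\theta}}\,\td\theta$ and $\mathcal{K}(t)=\int_{0}^{\pi/2}\frac{\td\theta}{\sqrt{1-t^{2}\sin^{2}\theta}}$: combining them,
\[
(2-t^{2})\mathcal{K}(t)-2\mathcal{E}(t)=2\bigl[\mathcal{K}(t)-\mathcal{E}(t)\bigr]-t^{2}\mathcal{K}(t)=-t^{2}\int_{0}^{\pi/2}\frac{\cos 2\theta}{\sqrt{1-t^{2}\sin^{2}\theta}}\,\td\theta,
\]
and after the substitution $\theta\mapsto\frac{\pi}{2}-\theta$ on $[\frac{\pi}{4},\frac{\pi}{2}]$ the remaining integral equals $\int_{0}^{\pi/4}\cos 2\theta\bigl[(1-t^{2}\sin^{2}\theta)^{-1/2}-(1-t^{2}\cos^{2}\theta)^{-1/2}\bigr]\td\theta$, whose integrand is negative because $\cos 2\theta>0$ and $\sin^{2}\theta<\cos^{2}\theta$ on $(0,\frac{\pi}{4})$; hence $(2-t^{2})\mathcal{K}(t)>2\mathcal{E}(t)$. (This last step could also be replaced by a known sharp estimate for the ratio $\mathcal{E}/\mathcal{K}$.) I expect the chief obstacle to be exactly this monotonicity chain — arranging the successive derivatives so that they collapse to a clean, provable elliptic-integral inequality, and confirming that $h$ and the numerator of $F'$ vanish precisely at $t=0$, which is what makes the infimum equal $\frac{3}{4}$ rather than something strictly smaller.
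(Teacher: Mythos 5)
Your proposal is correct, and it reaches the theorem through the same reduction as the paper: after normalizing, both arguments study the very same ratio $F(t)=\frac{3}{t^{2}}\bigl(\frac{2}{\pi}[2\mathcal{E}(t)-(1-t^{2})\mathcal{K}(t)]-1\bigr)$, show it increases strictly from $\frac34$ to $\frac{12}{\pi}-3$, and read off the sharp constants. Where you differ is in how the monotonicity is certified. The paper applies the monotone form of L'H\^opital's rule (its Lemma~\ref{lem2.2}) to the quotient $f_1/f_2$ with $f_1(r)=\frac2\pi[2\mathcal{E}-(r')^2\mathcal{K}]-1$ and $f_2(r)=r^2$, which reduces everything to the known fact (Lemma~\ref{lem2.1}, imported from Anderson--Vamanamurthy--Vuorinen) that $[\mathcal{E}-(r')^{2}\mathcal{K}]/r^{2}$ is increasing. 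You instead differentiate $F$ directly, collapse $F'$ to the sign of $h(t)=\pi-3\mathcal{E}+(1-t^{2})\mathcal{K}$ with $h(0)=0$, and reduce to the single inequality $(2-t^{2})\mathcal{K}(t)>2\mathcal{E}(t)$, which you prove from scratch by the folding substitution $\theta\mapsto\frac\pi2-\theta$. It is worth noting that your inequality is not merely analogous to Lemma~\ref{lem2.1} but literally equivalent to it: since $\frac{\td}{\td r}[\mathcal{E}-(r')^{2}\mathcal{K}]=r\mathcal{K}$, one computes
\begin{equation*}
\frac{\td}{\td r}\,\frac{\mathcal{E}-(r')^{2}\mathcal{K}}{r^{2}}
=\frac{(2-r^{2})\mathcal{K}-2\mathcal{E}}{r^{3}},
\end{equation*}
so your integral argument is in effect a self-contained proof of the cited Lemma~\ref{lem2.1}. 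What your route buys is independence from both external ingredients (the AVV monotonicity theorem and the L'H\^opital monotone rule), at the cost of one extra layer of differentiation ($F\to h\to h'$) and the explicit integral estimate; the paper's route is shorter on the page but leans on two quoted results. All the individual computations in your write-up (the Landen step, the derivative of $2\mathcal{E}-(1-t^{2})\mathcal{K}$, the endpoint limits, and the sign analysis of the folded integrand) check out.
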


\begin{thm}\label{th3.2}
The double inequality~\eqref{eq3.13} holds for all $a,b>0$ with $a\ne b$ if and only if $\alpha_1\le \pi-3$ and $\beta_1 \ge\frac{1}{4}$.
\end{thm}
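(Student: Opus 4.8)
The plan is to follow the scheme behind Theorem~\ref{th3.1} and reduce everything to a one-variable monotonicity statement. Since $T$, $A$, $\overline{C}$ are symmetric and positively homogeneous of degree one, I may assume $a>b$ and normalise by $a=1+t$, $b=1-t$ with $t\in(0,1)$; then $A=1$ and $\overline{C}=1+t^{2}/3$. Writing $k=\frac{2\sqrt{t}}{1+t}$, $k'=\sqrt{1-k^{2}}=\frac{1-t}{1+t}$ and $\mathcal{K}(r)=\int_{0}^{\pi/2}\frac{\td\theta}{\sqrt{1-r^{2}\sin^{2}\theta}}$, one has $T(t)=\frac{2(1+t)}{\pi}\mathcal{E}(k)$ together with the companion identity $\frac1\pi\int_{0}^{\pi}\frac{\td\phi}{\sqrt{1+t^{2}+2t\cos\phi}}=\frac{2}{\pi(1+t)}\mathcal{K}(k)$. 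Because $\overline{C}>A$, a short rearrangement turns the double inequality~\eqref{eq3.13} into
$$
\alpha_{2}<F(t)<\beta_{2}\quad\text{for all }t\in(0,1),\qquad F(t)=\frac{A(\overline{C}-T)}{T(\overline{C}-A)}=\frac{3+t^{2}-3T(t)}{t^{2}\,T(t)}.
$$
Thus the optimal constants are exactly $\alpha_{2}=\inf_{(0,1)}F$ and $\beta_{2}=\sup_{(0,1)}F$, and since these extrema will be attained only in the limit, the corresponding inequalities will be strict automatically.

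First I would compute the two boundary limits. From the expansion $\mathcal{E}(k)=\frac{\pi}{2}\bigl(1-\frac{k^{2}}{4}-\frac{3k^{4}}{64}-\cdots\bigr)$, or equivalently by expanding the integrand of $T$ in powers of $t$, one gets $T(t)=1+\frac{t^{2}}{4}+O(t^{4})$ as $t\to0^{+}$, so that $F(t)\to\frac14$. At the other end $\mathcal{E}(1)=1$ gives $T(1)=\frac4\pi$ and $\overline{C}(1)=\frac43$, hence $F(t)\to\dfrac{4-12/\pi}{4/\pi}=\pi-3$ as $t\to1^{-}$. Neither value is attained, since $t=0$ corresponds to $a=b$ and $t=1$ to $b=0$, both excluded. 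So it remains only to show that $F$ takes every intermediate value, for which it is enough to prove that $F$ is strictly decreasing on $(0,1)$: then $F$ maps $(0,1)$ onto $(\pi-3,\tfrac14)$, whence $\inf F=\pi-3$ and $\sup F=\tfrac14$.

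The monotonicity of $F$ is the core of the argument and the step I expect to be the main obstacle. A routine differentiation shows that $F'(t)$ has the same sign as $6T(T-1)-t(3+t^{2})T'(t)$; substituting $T'(t)=\frac{1+t}{\pi t}\bigl(\mathcal{E}(k)-k'\mathcal{K}(k)\bigr)$, which follows by differentiating the relation $T(t)=\frac{2(1+t)}{\pi}\mathcal{E}(k)$ and using $\frac{\td\mathcal{E}}{\td k}=\frac{\mathcal{E}-\mathcal{K}}{k}$, then putting $t=\frac{1-k'}{1+k'}$ and clearing positive denominators, the claim $F'<0$ reduces, with $s=k'\in(0,1)$, to
$$
\bigl(4+7s+4s^{2}\bigr)\mathcal{E}(k)-s\bigl(1+s+s^{2}\bigr)\mathcal{K}(k)>\frac{12}{\pi}\,(1+s)\,\mathcal{E}(k)^{2}.
$$
The difference of the two sides vanishes at $s=1$ (the $a=b$ limit) and equals $4-\frac{12}{\pi}>0$ at $s=0$, so the real work is to rule out interior zeros. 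I would do this either by proving that this difference is itself monotone in $s$, or, more robustly, by dividing out the factor $1-s$ (equivalently $k^{2}$) and expressing what remains as a power series in $k^{2}$ with nonnegative coefficients, using the Maclaurin series of $\mathcal{E}$ and $\mathcal{K}$ together with Cauchy products; a workable fallback is to sandwich $\mathcal{E}$, $\mathcal{E}^{2}$ and $\mathcal{K}$ between the classical sharp elementary bounds for the complete elliptic integrals. In concrete terms this amounts to the two sharp estimates $\frac{4(3+t^{2})}{12+t^{2}}<T(t)<\frac{3+t^{2}}{3+(\pi-3)t^{2}}$ on $(0,1)$, and it is the proof of these that carries the computational burden.

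Once $F$ is shown to be strictly decreasing on $(0,1)$, the theorem follows: the double inequality~\eqref{eq3.13} holds for all $a,b>0$ with $a\neq b$ if and only if $\alpha_{2}\le\pi-3$ and $\beta_{2}\ge\frac14$. The same scheme, applied to the convex combination $\alpha_{1}\overline{C}+(1-\alpha_{1})A$ rather than to its harmonic analogue---with the auxiliary function now $G(t)=\frac{3(T(t)-1)}{t^{2}}$, which is instead \emph{increasing} from $\frac34$ at $t\to0^{+}$ to $\frac{12}{\pi}-3$ at $t\to1^{-}$---reproves Theorem~\ref{th3.1}, and the promised new bounds for $\mathcal{E}(r)$ come out by undoing the normalisation (for instance $a=1$, $b=\sqrt{1-r^{2}}$) in these inequalities.
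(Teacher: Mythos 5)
Your setup and reduction are correct: \eqref{eq3.13} is equivalent to $\alpha_2<F(t)<\beta_2$ for your $F$, the endpoint limits $\tfrac14$ and $\pi-3$ are right, your formula $T'(t)=\frac{(1+t)\mathcal{E}(k)-(1-t)\mathcal{K}(k)}{\pi t}$ checks out, and the displayed inequality
\begin{equation*}
(4+7s+4s^2)\mathcal{E}(k)-s(1+s+s^2)\mathcal{K}(k)>\frac{12}{\pi}(1+s)\mathcal{E}(k)^2,\qquad s=k',
\end{equation*}
is indeed exactly what $F'<0$ amounts to after clearing positive factors. But this inequality --- which you yourself identify as carrying the whole burden --- is never proved; you only list candidate strategies. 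That is a genuine gap, and the candidates are unlikely to succeed as described. The difference of the two sides is far more degenerate at $k=0$ than your remark about the value $4-\frac{12}{\pi}$ at $s=0$ suggests: both sides expand as $6\pi-\frac{9\pi}{2}k^2+O(k^4)$, and since $F(t)-\frac14\asymp t^2$ with $t\asymp k^2$, the difference is in fact of order $k^8$ as $k\to0$. Sandwiching $\mathcal{E}$, $\mathcal{E}^2$, $\mathcal{K}$ between elementary bounds would therefore require bounds sharp to eighth order at the origin, and a power-series argument must verify the exact cancellation of seven leading coefficients in a combination involving the Cauchy square of $\mathcal{E}$ before it can even begin to argue positivity of the rest.

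The paper sidesteps this entirely by never differentiating the quotient. Writing the same ratio as $f_1/f_2$ with $f_1(r)=3+r^2-\frac6\pi\bigl[2\mathcal{E}-(r')^2\mathcal{K}\bigr]$ and $f_2(r)=\frac2\pi r^2\bigl[2\mathcal{E}-(r')^2\mathcal{K}\bigr]$, both vanishing at $r=0$, it applies the monotone form of L'H\^opital's rule (Lemma~\ref{lem2.2}): it suffices that
\begin{equation*}
\frac{f_1'(r)}{f_2'(r)}=\frac{2-\frac6\pi\frac{\mathcal{E}-(r')^2\mathcal{K}}{r^2}}{\frac2\pi\bigl[5\mathcal{E}-3(r')^2\mathcal{K}\bigr]}
\end{equation*}
be decreasing, and this splits into a positive decreasing numerator (Lemma~\ref{lem2.1}, since $\frac{\mathcal{E}-(r')^2\mathcal{K}}{r^2}$ increases from $\frac\pi4$ to $1$) over a positive increasing denominator (Lemma~\ref{lem2.3}). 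No $\mathcal{E}^2$ term ever appears. To complete your argument you should either prove your displayed inequality in full --- realistically by a monotonicity argument that will itself need the derivative formulas and Lemma~\ref{lem2.1}-type facts --- or, better, exploit the structure $f_1(0)=f_2(0)=0$ and switch to the L'H\^opital-type lemma, which reduces the problem to two tractable monotonicity statements.
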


As an immediate applications of Theorem~\ref{th3.1}, we will derive a new bounds in terms of elementary functions for the complete elliptic integral of the second kind.

\begin{thm}\label{cor4.1}
For $r\in(0,1)$ and $r'=\sqrt{1-r^2}\,$, we have
\begin{multline}\label{eq4.1}
\frac{\pi}{2}\biggl[\frac{1+r'+(r')^2}{2(1+r')}+\frac{1+r'}{8}\biggr]<\mathcal{E}(r)\\
<\frac{\pi}{2}\biggl[\biggl(\frac{8}{\pi}-2\biggr)\frac{1+r'+(r')^2}{1+r'} +\biggl(2-\frac6\pi\biggr)(1+r')\biggr].
\end{multline}
\end{thm}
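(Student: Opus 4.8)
The plan is to deduce \eqref{eq4.1} by specializing Theorem~\ref{th3.1} to a convenient normalized pair. Since $\overline{C}$, $A$, and $T$ are each positively homogeneous of degree one, there is no loss in taking $a=1$ and $b=r'=\sqrt{1-r^2}$; for $r\in(0,1)$ we have $r'\in(0,1)$, so $a$ and $b$ are distinct positive numbers and Theorem~\ref{th3.1} applies to them.

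The one genuine observation is the link between $T(1,r')$ and $\mathcal{E}(r)$. Reading off the $a>b$ branch in Toader's definition gives $T(1,r')=\frac{2}{\pi}\mathcal{E}\bigl(\sqrt{1-(r')^2}\,\bigr)$, and $\sqrt{1-(r')^2}=r$, so $T(1,r')=\frac{2}{\pi}\mathcal{E}(r)$. The remaining ingredients are the elementary evaluations $\overline{C}(1,r')=\frac{2(1+r'+(r')^2)}{3(1+r')}$ and $A(1,r')=\frac{1+r'}{2}$.

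It then remains to substitute these three quantities into the sharp inequality of Theorem~\ref{th3.1}, that is, with $\alpha_1=\frac34$ on the left and $\beta_1=\frac{12}{\pi}-3$ (hence $1-\beta_1=4-\frac{12}{\pi}$) on the right, to multiply through by $\frac{\pi}{2}$, and to collect constants. On the lower side $\frac34\cdot\frac23=\frac12$ produces the term $\frac{1+r'+(r')^2}{2(1+r')}$ together with $\frac14\cdot\frac12(1+r')=\frac{1+r'}{8}$; on the upper side $\bigl(\frac{12}{\pi}-3\bigr)\cdot\frac23=\frac8\pi-2$ and $\bigl(4-\frac{12}{\pi}\bigr)\cdot\frac12=2-\frac6\pi$ produce the two summands displayed in \eqref{eq4.1}. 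There is no real obstacle here: the proof is pure bookkeeping once Theorem~\ref{th3.1} is granted, the only points demanding care being the correct choice of the $a>b$ branch for $T$ and the arithmetic simplification of the coefficients. If desired, one may note that applying the same substitution to \eqref{eq3.13} yields in an identical fashion a two-sided bound for $1/\mathcal{E}(r)$, and hence, upon inversion, a further elementary estimate for $\mathcal{E}(r)$.
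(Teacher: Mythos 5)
Your proposal is correct and follows essentially the same route as the paper: both specialize Theorem~\ref{th3.1} to the pair $(a,b)$ with $r'=b/a$ (you normalize to $a=1$, $b=r'$, which is equivalent by homogeneity), use $T(1,r')=\frac{2}{\pi}\mathcal{E}(r)$, and substitute $\alpha_1=\frac34$, $\beta_1=\frac{12}{\pi}-3$. Your version simply spells out the coefficient arithmetic that the paper leaves implicit.
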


In Section~\ref{compar-sec} we will compare the above main results with some well-known ones.

\begin{rem}
Some estimates for the three kinds of complete elliptic integrals were established in~\cite{Hua-Qi-toader312, Jiang-toader29, Hua-Qi-toader38, Hua-Qi-toader310, Hua-Qi-toader311, Bracken-Expo-01, MIA-1754.tex, mia-qi-cui-xu-99, qh, elliptic-mean-comparison-rev2.tex, Hua-Qi-toader39, Hua-Qi-toader313, Yin-Qi-Ellip-Int.tex}. and there is a short review and survey in~\cite[pp.~40\nobreakdash--46]{refine-jordan-kober.tex-JIA} for these estimates.
\end{rem}

\section{Lemmas}

For proving our main results, we need the following lemmas.
\par
For $0<r<1$, denote $r'=\sqrt{1-r^2}\,$. It is known that Legendre's complete elliptic integrals of the first and second kind are defined respectively by
$$
\begin{cases}\displaystyle
{\mathcal{K}}={\mathcal{K}}(r)=\int_{0}^{{\pi}/{2}}\frac1{\sqrt{1-r^2\sin^2\theta}\,}\td \theta,\\
{\mathcal{K}}'={\mathcal{K}}'(r)={\mathcal{K}}(r'),\\
{\mathcal{K}}(0)=\dfrac\pi2,\\
{\mathcal{K}}(1)=\infty
\end{cases}
$$
and
$$
\begin{cases}\displaystyle
{\mathcal{E}}={\mathcal{E}}(r)=\int_{0}^{\pi/2}\sqrt{1-r^2\sin^2\theta}\,\td\theta,\\
{\mathcal{E}}'={\mathcal{E}}'(r)={\mathcal{E}}(r'),\\
{\mathcal{E}}(0)=\dfrac\pi2,\\
{\mathcal{E}}(1)=1.
\end{cases}
$$
See~\cite{Hua-Qi-toader36, Hua-Qi-toader37}.
For $0<r<1$, the following formulas were presented in~\cite[Appendix~E, pp.~474\nobreakdash--475]{Hua-Qi-toader38}:
\begin{gather*}
 \frac{\td \mathcal{K}}{\td r}=\frac{\mathcal{E}-(r')^2{\mathcal{K}}}{r(r')^2},\quad
\frac{\td \mathcal{E}}{\td r}=\frac{\mathcal{E}-\mathcal{K}}{r},\quad
\frac{\td ({\mathcal{E}}-(r')^2{\mathcal{K}})}{\td r}=r{\mathcal{K}},\\
\frac{\td ({\mathcal{K}}-{\mathcal{E}})}{\td r}=\frac{r{\mathcal{E}}}{(r')^2},
\quad
{\mathcal{E}}\biggl(\frac{2\sqrt{r}\,}{1+r}\biggr)=\frac{2\mathcal{E}-(r')^2\mathcal{K}}{1+r}.
\end{gather*}

\begin{lem}[{\cite[Theorem~3.21]{Hua-Qi-toader38}}]\label{lem2.1}
The function $\frac{\mathcal{E}-(r')^2\mathcal{K}}{r^2}$ is strictly increasing from $(0,1)$ onto $\bigl(\frac\pi4,1\bigr)$.
\end{lem}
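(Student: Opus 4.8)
The plan is to write
$f(r)=\dfrac{\mathcal{E}(r)-(r')^2\mathcal{K}(r)}{r^2}=\dfrac{g(r)}{h(r)}$
with $g(r)=\mathcal{E}(r)-(r')^2\mathcal{K}(r)$ and $h(r)=r^2$, to observe that $g(0^+)=\mathcal{E}(0)-\mathcal{K}(0)=\frac\pi2-\frac\pi2=0=h(0^+)$, and then to apply the well-known monotone form of L'Hospital's rule. Using the derivative identity $\frac{\td(\mathcal{E}-(r')^2\mathcal{K})}{\td r}=r\mathcal{K}$ recorded just before the lemma, we get $g'(r)=r\mathcal{K}(r)$ and $h'(r)=2r$, hence $\frac{g'(r)}{h'(r)}=\frac{\mathcal{K}(r)}{2}$. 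Since the integrand $\frac{1}{\sqrt{1-r^2\sin^2\theta}\,}$ is strictly increasing in $r\in(0,1)$ for every fixed $\theta\in(0,\pi/2)$, the function $\mathcal{K}(r)$, and therefore $\frac{g'}{h'}$, is strictly increasing on $(0,1)$. Because $h'$ does not vanish on $(0,1)$, the monotone L'Hospital rule immediately yields that $f=g/h$ is strictly increasing on $(0,1)$.

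It then remains only to identify the range, that is, to compute the one-sided limits $f(0^+)$ and $f(1^-)$. For the left endpoint I would invoke the classical Maclaurin expansions $\mathcal{E}(r)=\frac\pi2\bigl(1-\frac14r^2+O(r^4)\bigr)$ and $\mathcal{K}(r)=\frac\pi2\bigl(1+\frac14r^2+O(r^4)\bigr)$; substituting $(r')^2=1-r^2$ gives $(r')^2\mathcal{K}(r)=\frac\pi2\bigl(1-\frac34r^2+O(r^4)\bigr)$, whence $\mathcal{E}(r)-(r')^2\mathcal{K}(r)=\frac\pi4r^2+O(r^4)$ and $f(0^+)=\frac\pi4$. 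For the right endpoint, using $\mathcal{E}(1)=1$ together with the standard logarithmic asymptotics $\mathcal{K}(r)=O\bigl(\ln\frac1{r'}\bigr)$ as $r\to1^-$, we obtain $(r')^2\mathcal{K}(r)\to0$, so the numerator tends to $1$ while $h(r)=r^2\to1$, giving $f(1^-)=1$. Strict monotonicity and continuity of $f$ then force the range to be exactly $\bigl(\frac\pi4,1\bigr)$.

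The only mildly delicate points are the bookkeeping in the series computation at $r=0$ (one must verify that the $r^2$-coefficients combine to $\frac12$ rather than to something else) and the justification that $(r')^2\mathcal{K}(r)\to0$ as $r\to1^-$; both are routine once the known expansion and asymptotics of $\mathcal{K}$ are used. If one prefers to avoid the asymptotics of $\mathcal{K}$, one can instead bound $\mathcal{K}-\mathcal{E}$, and hence $(r')^2\mathcal{K}$, near $r=1$ via the identity $\frac{\td(\mathcal{K}-\mathcal{E})}{\td r}=\frac{r\mathcal{E}}{(r')^2}$, but invoking the classical behaviour of $\mathcal{K}$ is the cleanest route. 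No further obstacle is anticipated, since the heart of the statement — the strict monotonicity — follows at once from the single derivative identity and the evident monotonicity of $\mathcal{K}$.
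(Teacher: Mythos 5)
Your argument is correct. Note that the paper itself offers no proof of this lemma---it is quoted verbatim from \cite[Theorem~3.21]{Hua-Qi-toader38}---but your proof is essentially the standard one from that source and uses exactly the machinery the paper sets up for its own theorems: the derivative identity $\frac{\td(\mathcal{E}-(r')^2\mathcal{K})}{\td r}=r\mathcal{K}$ reduces $\frac{g'}{h'}$ to $\frac{\mathcal{K}}{2}$, whose strict monotonicity feeds into the monotone l'H\^opital rule (Lemma~\ref{lem2.2}), and your endpoint computations (the series at $0$ giving $\frac\pi4$, and $(r')^2\mathcal{K}\to0$ giving the value $1$ at $r=1$) are both right.
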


\begin{lem}\label{lem2.3}
The function $5\mathcal{E}(r)-3(r')^2\mathcal{K}(r)$ is positive and strictly increasing on $(0,1)$.
\end{lem}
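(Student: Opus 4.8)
The plan is to establish that $f(r):=5\mathcal{E}(r)-3(r')^2\mathcal{K}(r)$ is strictly increasing on $(0,1)$ and then read off positivity from its value at the left endpoint. First I would recast $f$ in a shape matched to the differentiation formulas listed above, namely
\[
f(r)=2\mathcal{E}(r)+3\bigl[\mathcal{E}(r)-(r')^2\mathcal{K}(r)\bigr].
\]
Applying $\frac{\td\mathcal{E}}{\td r}=\frac{\mathcal{E}-\mathcal{K}}{r}$ and $\frac{\td(\mathcal{E}-(r')^2\mathcal{K})}{\td r}=r\mathcal{K}$ then gives
\[
f'(r)=\frac{2(\mathcal{E}-\mathcal{K})}{r}+3r\mathcal{K}=\frac{2\mathcal{E}-(2-3r^2)\mathcal{K}}{r},
\qquad r\in(0,1).
\]

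The next step is to show $f'(r)>0$. The slightly delicate feature is that the coefficient $2-3r^2$ changes sign on $(0,1)$, so rather than splitting into cases I would regroup the numerator as
\[
2\mathcal{E}-(2-3r^2)\mathcal{K}=2\bigl[\mathcal{E}-(r')^2\mathcal{K}\bigr]+r^2\mathcal{K}.
\]
By Lemma~\ref{lem2.1}, $\mathcal{E}-(r')^2\mathcal{K}=r^2\cdot\frac{\mathcal{E}-(r')^2\mathcal{K}}{r^2}$ takes values in $\bigl(\frac{\pi}{4}r^2,\,r^2\bigr)$ and is in particular positive on $(0,1)$, while $r^2\mathcal{K}>0$ trivially; hence the numerator, and therefore $f'(r)$, is positive on the whole interval. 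Consequently $f$ is strictly increasing on $(0,1)$.

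Finally, for positivity I would compute the one-sided limit: since $\mathcal{E}(r)\to\frac\pi2$, $\mathcal{K}(r)\to\frac\pi2$, and $(r')^2\to1$ as $r\to0^+$, we get $\lim_{r\to0^+}f(r)=5\cdot\frac\pi2-3\cdot\frac\pi2=\pi$. Combining this with the strict monotonicity just proved yields $f(r)>\pi>0$ for every $r\in(0,1)$, which finishes the argument. I do not anticipate a real obstacle here; the only point needing a moment's care is spotting the regrouping $2\mathcal{E}-(2-3r^2)\mathcal{K}=2\bigl(\mathcal{E}-(r')^2\mathcal{K}\bigr)+r^2\mathcal{K}$, which reduces the sign question entirely to the positivity of $\mathcal{E}-(r')^2\mathcal{K}$ already supplied by Lemma~\ref{lem2.1}.
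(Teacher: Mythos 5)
Your proof is correct. The setup coincides with the paper's: both compute
\[
f'(r)=\frac{2\mathcal{E}(r)-2\mathcal{K}(r)+3r^2\mathcal{K}(r)}{r},
\]
and both finish by noting $f(0^+)=\pi$ so that monotonicity gives $f>\pi>0$. Where you diverge is in how the numerator is shown to be positive. The paper calls the numerator $g(r)$, differentiates once more, obtains $g'(r)>0$ from the derivative formulas, and concludes $g(r)>g(0)=0$; this costs a second differentiation but needs nothing beyond the listed derivative identities. You instead regroup algebraically,
\[
2\mathcal{E}-(2-3r^2)\mathcal{K}=2\bigl[\mathcal{E}-(r')^2\mathcal{K}\bigr]+r^2\mathcal{K},
\]
and invoke the positivity of $\mathcal{E}-(r')^2\mathcal{K}$ already contained in Lemma~\ref{lem2.1} (indeed only the positivity is needed, not the full range $\bigl(\tfrac{\pi}{4}r^2,r^2\bigr)$ you quote). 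This is slightly slicker: it avoids the second derivative entirely and sidesteps the sign change of the coefficient $2-3r^2$ in one line, at the price of leaning on Lemma~\ref{lem2.1}, whereas the paper's argument is self-contained modulo the derivative formulas. Both are valid; your identity checks out, since $2[\mathcal{E}-(1-r^2)\mathcal{K}]+r^2\mathcal{K}=2\mathcal{E}-2\mathcal{K}+3r^2\mathcal{K}$.
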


\begin{proof}
Let $f(r)=5\mathcal{E}(r)-3(r')^2\mathcal{K}(r)$ for $r\in(0,1)$ and $r'=\sqrt{1-r^2}\,$. A simple computation leads to
\begin{equation*}
  f'(r)=\frac{2\mathcal{E}(r)-2\mathcal{K}(r)+3r^2\mathcal{K}(r)}{r}\triangleq\frac{g(r)}{r}.
\end{equation*}
A direct differentiation yields
\begin{align*}
  g'(r)=\frac{r(\mathcal{E}(r))+3(r')^2\mathcal{K}(r)}{(r')^2}>0
\end{align*}
for all $r\in(0,1)$, that is, the function $g(r)$ is strictly increasing on $(0,1)$. Hence, it is derived that $g(r)>g(0)=0$, that $f'(r)>0$, that $f(r)$ is increasing on $(0,1)$, and that $f(x)>f(0)=\pi>0$.
\end{proof}

\begin{lem}[{\cite[Theorem~1.25]{Hua-Qi-toader38}}]\label{lem2.2}
For $-\infty<a<b<\infty$, let $f,g:[a,b]\to{\mathbb{R}}$ be continuous on $[a,b]$, differentiable on $(a,b)$, and $g'(x)\ne 0$ on $(a,b)$. If $\frac{f'(x)}{g'(x)}$ is increasing \textup{(}or decreasing respectively\textup{)} on $(a,b)$, so are
$$
\frac{f(x)-f(a)}{g(x)-g(a)}\quad\text{and}\quad  \frac{f(x)-f(b)}{g(x)-g(b)}.
$$
\end{lem}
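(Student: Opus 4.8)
To prove Lemma~\ref{lem2.2}, the plan is to establish the monotonicity of the two ratio functions by a single derivative computation, comparing each ratio with the derivative ratio $f'/g'$ through Cauchy's mean value theorem. First I would note that, since $g$ is differentiable on $(a,b)$ with $g'$ nowhere zero there, Darboux's property of derivatives forces $g'$ to keep a constant sign on $(a,b)$; replacing $g$ by $-g$ if necessary we may assume $g'>0$ (this reverses the sign of both quotients and interchanges ``increasing'' with ``decreasing'' for $f'/g'$, hence preserves the statement), and replacing $f$ by $-f$ if necessary we may assume that $f'/g'$ is increasing (same effect), so it suffices to prove that both quotients are increasing. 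With $g'>0$, $g$ is strictly increasing, so $g(x)-g(a)>0$ for $x\in(a,b]$ and $g(x)-g(b)<0$ for $x\in[a,b)$, which makes both quotients well defined and differentiable on the respective half-open intervals.

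For the first ratio put $h(x)=\dfrac{f(x)-f(a)}{g(x)-g(a)}$. Differentiating and factoring out $\dfrac{g'(x)}{g(x)-g(a)}$ gives
$$
h'(x)=\frac{g'(x)}{g(x)-g(a)}\left[\frac{f'(x)}{g'(x)}-\frac{f(x)-f(a)}{g(x)-g(a)}\right],
$$
and by Cauchy's mean value theorem there is a point $\xi\in(a,x)$ with $\dfrac{f(x)-f(a)}{g(x)-g(a)}=\dfrac{f'(\xi)}{g'(\xi)}$. Since $f'/g'$ is increasing and $\xi<x$, the bracket is nonnegative and the prefactor is positive, so $h'\ge0$ and $h$ is increasing. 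For the second ratio put $H(x)=\dfrac{f(x)-f(b)}{g(x)-g(b)}$; the analogous identity
$$
H'(x)=\frac{g'(x)}{g(x)-g(b)}\left[\frac{f'(x)}{g'(x)}-\frac{f(x)-f(b)}{g(x)-g(b)}\right]
$$
now features a point $\eta\in(x,b)$ with $\dfrac{f(x)-f(b)}{g(x)-g(b)}=\dfrac{f'(\eta)}{g'(\eta)}$. Since $\eta>x$ the bracket is nonpositive, but the prefactor $\dfrac{g'(x)}{g(x)-g(b)}$ is now negative, so the product is again nonnegative and $H$ is increasing.

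The computation is short, so there is no genuine obstacle; the delicate points are all bookkeeping. One must invoke Darboux's property so that $g'$ keeps a fixed sign, without which neither the ratios nor the signs of the two prefactors are controlled; and in the ratio based at $b$ one must check that the two sign reversals --- the auxiliary point $\eta$ lying to the \emph{right} of $x$, and the prefactor $\dfrac{g'(x)}{g(x)-g(b)}$ being negative --- cancel, so the conclusion still comes out ``increasing'' rather than ``decreasing.'' Finally it is worth recording that strict monotonicity of $f'/g'$ propagates: if $f'/g'$ is strictly increasing then $\xi<x$ (respectively $\eta>x$) makes the relevant inequality strict, whence $h$ and $H$ are strictly increasing.
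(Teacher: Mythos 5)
The paper offers no proof of this lemma: it is imported verbatim from \cite[Theorem~1.25]{Hua-Qi-toader38}, so there is nothing internal to compare against. Your argument is the standard proof of the l'H\^opital monotone rule and is correct: the Darboux reduction to $g'>0$ with $f'/g'$ increasing, the factored identity for $h'$ and $H'$, and the Cauchy mean value theorem step --- including the cancellation of the two sign reversals (auxiliary point $\eta>x$ versus negative prefactor) in the ratio based at $b$ --- are all handled properly, and your closing remark on strict monotonicity matches the form of the result as stated in the cited source.
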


\begin{rem}
Lemma~\ref{lem2.2} and its variants have been extensively applied in, for example,~\cite{B, wilker-Pinelis, QiBerg.tex, elliptic-mean-comparison-rev2.tex} and many references listed in~\cite{refine-jordan-kober.tex-JIA}, and have been generalized in, for example, \cite[Lemma~2.2]{Koumandos-Pedersen-287} and~\cite{wilker-Pinelis} and closely related references therein. For more information, please read the first sentence after~\cite[p.~582, Lemma~2.1]{B}, the references~\cite{avv-siam, Biernacki-Krzyz-1955, Ponnusamy-Vuorinen-97, Bessel-ineq-Dgree-CM.tex} and~\cite[Remark~2.2]{QiBerg.tex}
\end{rem}

\section{Proofs of main results}

Now we are in a position to prove our main results.

\begin{proof}[Proof of Theorem~\ref{th3.1}]
Without loss of generality, we assume that $a>b$. Let $t=\frac{b}a\in(0,1)$ and $r=\frac{1-t}{1+t}$. Then
\begin{equation*}
\frac{T(a,b)-A(a,b)}{\overline{C}(a,b)-A(a,b)} =\frac{\frac{\pi}{2}\mathcal{E}'(t)-\frac{1+t}{2}}{\frac{2}{3}\frac{1+t+t^2}{1+t}-\frac{1+t}{2}}
  =\frac{\frac{2}{\pi}\mathcal{E}\bigl(\frac{2\sqrt{r}\,}{1+r}\bigr) -\frac{1}{1+r}}{\frac{1}{3}\frac{r^2}{1+r}}
  =3\frac{\frac{2}{\pi}[2\mathcal{E}-(r')^2\mathcal{K}]-1}{r^2}.
\end{equation*}
Let $f_1(r)=\frac{2}{\pi}[2\mathcal{E}-(r')^2\mathcal{K}]-1$, $f_2(r)=r^2$, and
\begin{equation*}
f(r)=3\frac{f_1(r)}{f_2(r)}=3\frac{\frac{2}{\pi}\bigl[2\mathcal{E}-(r')^2\mathcal{K}\bigr]-1}{r^2}.
\end{equation*}
Simple computations lead to
\begin{align*}
  f_1(0)&=f_2(0)=0,&
  f_1'(r)&=\frac{2}{\pi}\frac{\mathcal{E}-(r')^2\mathcal{K}}{r},&
  f_2'(r)&=2r,&
 \frac{f_1'(r)}{f_2'(r)}&=\frac{1}{\pi}\frac{\mathcal{E}-(r')^2\mathcal{K}}{r^2}.
\end{align*}
Combining this with Lemmas~\ref{lem2.1} and~\ref{lem2.2} reveals that the function $f(r)$ is strictly increasing on $(0, 1)$. Further making use of L'H\^opital's rule gives
\begin{equation*}
  \lim_{r\to 0^+}f(r)=\frac{3}{4}\quad\text{and}\quad \lim_{r\to 1^-}f(r)=\frac{12}{\pi}-3.
\end{equation*}
Theorem~\ref{th3.1} is thus proved.
\end{proof}

\begin{proof}[Proof of Theorem~\ref{th3.2}]
Without loss of generality, we assume that $a>b$. Let $t=\frac{b}a\in(0,1)$ and $r=\frac{1-t}{1+t}$. Then
\begin{multline*}
\frac{{1}/{T(a,b)}-{1}/{\overline{C}(a,b)}}{{1}/{A(a,b)} -{1}/{\overline{C}(a,b)}}
=\frac{1-{T(a,b)}/{\overline{C}(a,b)}}{T(a,b) \bigl[{1}/{A(a,b)}-{1}/{\overline{C}(a,b)}\bigr]}\\
=\frac{1-{\frac{2}{\pi}\mathcal{E}'(t)}\big/{\frac{2}{3}\frac{1+t+t^2}{1+t}}} {\frac{2}{\pi}\mathcal{E}'(t)\bigl[\frac{2}{1+t}-\frac{3(1+t)}{2(1+t+t^2)}\bigr]}
=\frac{3+r^2-\frac6\pi\bigl[2\mathcal{E}-(r')^2\mathcal{K}\bigr]}{\frac{2}{\pi} r^2 \bigl[2\mathcal{E}-(r')^2\mathcal{K}\bigr]}.
\end{multline*}
Let $f_1(r)=3+r^2-\frac6\pi\bigl[2\mathcal{E}-(r')^2\mathcal{K}\bigr]$, $f_2(r)=\frac{2}{\pi}r^2\bigl[2\mathcal{E}-(r')^2\mathcal{K}\bigr]$, and
\begin{equation*}
f(r)=\frac{f_1(r)}{f_2(r)}=\frac{3+r^2-\frac6\pi\bigl[2\mathcal{E}-(r')^2\mathcal{K}\bigr]} {\frac{2}{\pi}r^2\bigl[2\mathcal{E}-(r')^2\mathcal{K}\bigr]}.
\end{equation*}
Simple computations lead to
\begin{gather*}
  f_1(0)=f_2(0)=0,\quad
  f_1'(r)=2r-\frac6\pi\frac{\mathcal{E}-(r')^2\mathcal{K}}{r},\\
  f_2'(r)=\frac{2}{\pi}\bigl[5\mathcal{E}-3(r')^2\bigr]\mathcal{K}r,\quad
 \frac{f_1'(r)}{f_2'(r)}=\frac{2-\frac6\pi\frac{[\mathcal{E}-(r')^2\mathcal{K}]}{r^2}} {\frac{2}{\pi}[5\mathcal{E}-3(r')^2\mathcal{K}]}.
\end{gather*}
Combining this with Lemmas~\ref{lem2.1}, \ref{lem2.3}, and~\ref{lem2.2} yields that the function $f(r)$ is strictly decreasing on $(0, 1)$. Making use of L'H\^opital's rule shows that
$$
\lim_{r\to 0^+}f(r)=\frac{1}{4}\quad\text{and}\quad
  \lim_{r\to 1^-}f(r)={\pi}-3 .
$$
Thus, Theorem~\ref{th3.2} is proved.
\end{proof}

\begin{proof}[Proof of Theorem~\ref{cor4.1}]
Without loss of generality, assume that $a>b$. Substituting $r'=\frac{b}a$, $\alpha_1=\frac34$, and $\beta_1=\frac{12}\pi-3$ into Theorem~\ref{th3.1} produces Thorem~\ref{cor4.1}.
\end{proof}

\section{Comparisons with some known results}\label{compar-sec}

In~\cite{Hua-Qi-toader34}, it was obtained that
\begin{multline}\label{eq4.2}
\frac{\pi}{2}\biggl[\frac{1}{2}\sqrt{\frac{1+(r')^2}{2}}\,+\frac{1+r'}{4}\biggr]<\mathcal{E}(r)\\
<\frac{\pi}{2}\biggl[\frac{4-\pi}{\bigl(\sqrt{2}\,-1\bigr)\pi}\sqrt{\frac{1+(r')^2}{2}}\, +\frac{(\sqrt{2}\,\pi-4)(1+r')}{2\bigl(\sqrt{2}\,-1\bigr)\pi}\biggr]
\end{multline}
for all $r\in(0,1)$ and $r'=\sqrt{1-r^2}\,$.
Guo and Qi proved in~\cite{MIA-1754.tex} that
\begin{equation}\label{eq4.3}
  \frac{\pi}{2}-\frac{1}{2}\ln \frac{(1+r)^{1-r}}{(1-r)^{1+r}}<\mathcal{E}(r)< \frac{\pi-1}{2}+\frac{1-r^2}{4r}\ln \frac{1+r}{1-r}
\end{equation}
for all $r\in(0,1)$.
It was pointed out in~\cite{Hua-Qi-toader34} that the bounds in~\eqref{eq4.2} for $\mathcal{E}(r)$ are better than those in~\eqref{eq4.3} for some $r\in (0,1)$.
Very recently, Yin and Qi obtained in~\cite{Yin-Qi-Ellip-Int.tex} that
\begin{equation}\label{eq-l}
\frac{\pi}{2}\frac{\sqrt{6+2\sqrt{1-r^2}\,-3r^2}\,}{2\sqrt{2}\,}\le \mathcal{E}(r)\le \frac{\pi}{2}\frac{\sqrt{10-2\sqrt{1-r^2}\,-5r^2}\,}{2\sqrt{2}\,}.
\end{equation}
\par
Let
$$
g(x)=\frac{1+x+x^2}{2(1+x)}+\frac{1+x}{8}-\biggl(\frac{1}{2} \sqrt{\frac{1+x^2}{2}}\,+\frac{1+x}{4}\biggr)
$$
for $x\in(0,1)$. Then a simplification leads to
\begin{equation*}
  g(x)=\frac{3x^2+2x+3-2(1+x)\sqrt{2(1+x^2)}\,}{8(1+x)}.
\end{equation*}
Since
\begin{equation*}
\bigl(3x^2+2x+3\bigr)^2-\bigl[2(1+x)\sqrt{2(1+x^2)}\,\bigr]^2
=(1-x)^4>0,
\end{equation*}
the lower bound in~\eqref{eq4.1} for $\mathcal{E}(r)$ is better than the one in~\eqref{eq4.2}.
\par
Since
$$
\frac{1+x+x^2}{2(1+x)}+\frac{1+x}{8}>\frac{\sqrt{6+2x-3(1-x^2)}\,}{2\sqrt{2}\,}
$$
is equivalent to
$$
\bigl(5x^2+6x+5\bigr)^2>8(x+1)^2\bigl(3x^2+2x+3\bigr)
$$
and $(x-1)^4>0$, the lower bound in~\eqref{eq4.1} for $\mathcal{E}(r)$ is better than the one in~\eqref{eq-l}.
\par
Let
\begin{align*} J(r)&=\frac{\pi}{2}\biggl[\biggl(\frac8\pi-2\biggr)\frac{1+r'+(r')^2}{1+r'}+(2-\frac6\pi)(1+r')\biggr],\\
D(r)&=\frac{\pi}{2}\biggl[\frac{4-\pi}{\bigl(\sqrt{2}\,-1\bigr)\pi}\sqrt{\frac{1+(r')^2}{2}}\, +\frac{(\sqrt{2}\,\pi-4)(1+r')}{2\bigl(\sqrt{2}\,-1\bigr)\pi}\biggr],\\ Q(r)&=\frac{\pi-1}{2}+\frac{1-r^2}{4r}\ln \frac{1+r}{1-r},\\
Y(r)&=\frac{\pi}{2}\frac{\sqrt{10-2\sqrt{1-r^2}\,-5r^2}\,}{2\sqrt{2}\,}.
\end{align*}
The values of these functions at points $0.1$, $0.2$, $0.3$, $0.4$, $0.5$, $0.6$, $0.7$, $0.8$, $0.9$ can be numerical computed and listed in Table~\ref{JDQY-values}.
\begin{table}[htbp]
\caption{Values of $J(r)$, $D(r)$, $Q(r)$, and $Y(r)$}
\begin{tabular}{c|c|c|c|c}
$r$&$J(r)$&$D(r)$&$Q(r)$&$Y(r)$\\
\hline
$0.1$&$ 1.566862174\dotsm $&$1.566862736\dotsm $&$1.567456298\dotsm $&$1.566866887\dotsm$ \\
$0.2$&$ 1.554972309\dotsm $&$1.554981471\dotsm $&$1.557354457\dotsm $&$1.555049510\dotsm$ \\
$0.3$&$ 1.534853276\dotsm $&$1.534901499\dotsm $&$1.540234393\dotsm $&$1.535259718\dotsm$ \\
$0.4$&$ 1.506007907\dotsm $&$1.506169094\dotsm $&$1.515627704\dotsm $&$1.507368120\dotsm$ \\
$0.5$&$ 1.467637170\dotsm $&$1.468061483\dotsm $&$1.482775936\dotsm $&$1.471228040\dotsm$ \\
$0.6$&$ 1.418485626\dotsm $&$1.419455645\dotsm $&$1.440474824\dotsm $&$1.426746617\dotsm$ \\
$0.7$&$ 1.356514851\dotsm $&$1.358548915\dotsm $&$1.386741519\dotsm $&$1.374078083\dotsm$ \\
$0.8$&$ 1.278097245\dotsm $&$1.282149209\dotsm $&$1.317984092\dotsm $&$1.314222496\dotsm$ \\
$0.9$&$ 1.175305090\dotsm $&$1.183095913\dotsm $&$1.226197273\dotsm $&$1.251499407\dotsm$ \\
\hline
\end{tabular}
\label{JDQY-values}
\end{table}
This implies that the upper bound in~\eqref{eq4.1} for $\mathcal{E}(r)$ are better than those in~\eqref{eq4.2}, \eqref{eq4.3}, and~\eqref{eq-l} for some $r\in(0,1)$.
\par
In conclusion, the double inequality~\eqref{eq4.1} is better than some known results in~\cite{Hua-Qi-toader34, MIA-1754.tex, Yin-Qi-Ellip-Int.tex} somewhere.

\end{document}